\documentclass[11pt]{article}

\usepackage[T1]{fontenc}
\usepackage[utf8]{inputenc}
\usepackage[margin=1.05in]{geometry}
\usepackage{amsmath,amssymb,amsthm,mathtools}
\usepackage{booktabs}
\usepackage{microtype}
\usepackage[hidelinks]{hyperref}
\hypersetup{pdftitle={The Taub-NUT Metric Is Not Projectively Induced},
  pdfauthor={Shaosai Huang}}

\newtheorem{theorem}{Theorem}[section]

\newtheorem{lemma}[theorem]{Lemma}
\newtheorem{corollary}[theorem]{Corollary}
\newtheorem{mainthm}{Theorem}

\newtheorem{maincor}[mainthm]{Corollary}
\theoremstyle{definition}

\newtheorem{remark}[theorem]{Remark}

\newcommand{\C}{\mathbb C}
\newcommand{\R}{\mathbb R}
\newcommand{\N}{\mathbb N}
\newcommand{\CP}{\mathbb{CP}}
\newcommand{\FS}{\mathrm{FS}}
\newcommand{\supp}{\operatorname{supp}}

\newcommand{\dd}{\mathrm d}
\newcommand{\ee}{\mathrm e}
\newcommand{\Wo}{W_0}

\title{The Taub--NUT Metric Is Not Projectively
Induced\thanks{Working paper. Comments welcome.}}
\author{Shaosai Huang\thanks{Kspectra Research Inc., Toronto, ON, M2N 0G3,
Canada. Email:
\href{mailto:arthur.foxie.huang@kspectra.ai}{arthur.foxie.huang@kspectra.ai}.}}
\date{September 18, 2026}

\begin{document}
\maketitle

\begin{abstract}
LeBrun's K\"ahler realization $g_m$ of the Taub--NUT metric on $\C^2$ is
complete, Ricci-flat and not flat.  Loi, Zedda and Zuddas proved that no
multiple $\alpha g_m$ admits a K\"ahler immersion into a finite- or
infinite-dimensional complex projective space when $m>\alpha/2$, and
conjectured that the same holds for every $m>0$.  We prove the conjecture.
The restriction of the K\"ahler potential to the axis $z_2=0$ is governed by
the Lambert $W$ function, so $\exp(\alpha\Phi_m)$ has a finite radius of
convergence as a power series in $|z_1|^2$ although it is real analytic on
the whole half-line; the Vivanti--Pringsheim theorem forbids nonnegative
Taylor coefficients, and Calabi's criterion fails.  We state the mechanism,
which Arezzo, Loi, Placini and Zedda recently used for radial metrics, as a
general obstruction to K\"ahler immersions.  In statistical terms the axis
restriction of $g_m$ would be a natural exponential family with mean domain
$(0,\infty)$ and variance function $\mu/(1+2m\mu)$; the argument gives an
elementary proof of the known fact, due to Bar-Lev, Bshouty and Enis, that
no such family exists with variance function $\mu/(1+c\mu)$ for any $c>0$.
The result confirms one
more case of the conjecture of Loi, Salis and Zuddas that Ricci-flat
projectively induced K\"ahler metrics are flat.  The analytic core of the
proof has been machine-checked in Lean~4.
\end{abstract}

\noindent\textit{2020 Mathematics Subject Classification.} Primary 53C55;
Secondary 32Q15, 53C25, 30B10, 62E10.

\noindent\textit{Keywords.} Taub--NUT metric; projectively induced K\"ahler
metrics; Calabi's diastasis; Ricci-flat K\"ahler metrics;
Vivanti--Pringsheim theorem; Lambert $W$ function; natural exponential
families; variance functions.

\section{Introduction}

This note settles a conjecture of Loi, Zedda and Zuddas about the
Taub--NUT metric.  The question is whether LeBrun's K\"ahler model of this
metric on $\C^2$, or any constant multiple of it, can be induced
holomorphically and isometrically from a complex projective space.  We show
that it cannot.  The obstruction is a single branch point of the Lambert $W$
function, and it has a natural counterpart for exponential families in
statistics.

A K\"ahler metric $g$ on a complex manifold $M$ is \emph{projectively
induced} if $(M,g)$ admits a K\"ahler immersion, that is a holomorphic
isometric immersion, into a complex projective space $(\CP^N,g_{\FS})$ with
$N\le\infty$.  Calabi's diastasis \cite{Calabi1953} reduces the existence of
such an immersion near a point to the positive semidefiniteness of an
explicit Hermitian matrix, and much of the subsequent theory asks which canonical metrics pass
this test; see the monograph \cite{LoiZedda2018}.  For Einstein metrics the
answer is expected to be very rigid.  Umehara \cite{Umehara1987} showed that
Einstein K\"ahler submanifolds of $\C^N$ and of complex hyperbolic space are
totally geodesic, Hulin \cite{Hulin2000} showed that a compact
K\"ahler--Einstein submanifold of a finite-dimensional projective space has
positive scalar curvature, and Loi, Salis and Zuddas formulated the following
conjecture \cite[Conjecture~1]{LoiSalisZuddas2018}.

\begin{quote}
\emph{A Ricci-flat projectively induced K\"ahler metric is flat.}
\end{quote}

The conjecture is known for immersions into finite-dimensional projective
spaces \cite{ArezzoLiLoi2025}; for metrics with a K\"ahler potential
depending only on $|z_1|^2,\dots,|z_n|^2$ in coordinates centred at a
point, this case already follows from
Salis's theorem that such a K\"ahler--Einstein metric, if finitely
projectively induced, has positive Einstein constant
\cite[Theorem~1.1]{Salis2017}.  In infinite dimension it is known for radial
metrics, first under a stability assumption \cite{LoiSalisZuddas2018} and
then in general \cite{ArezzoLoiPlaciniZedda2026}, for Calabi's Ricci-flat
metrics on line bundles over compact K\"ahler--Einstein manifolds, in
particular for the Eguchi--Hanson metric \cite{LoiZeddaZuddas2021}, and
for K\"ahler cones over regular complete Sasakian manifolds
\cite{MariniTardiniZedda2024}.  For Stenzel's metrics on the
complexifications of $\CP^n$ and $\mathbb{HP}^n$, $n\ge2$, it is known for
the multiples $cg$ with $0<c\le1$ \cite[Theorem~1]{Zedda2021}.

LeBrun \cite{LeBrun1991} realized the Taub--NUT metric as a complete
Ricci-flat K\"ahler metric on $\C^2$, showing that such metrics need not be
flat.  For $m\ge0$ let $u,v\ge0$ be defined implicitly by
\begin{equation}\label{eq:lebrun-coordinates}
 |z_1|=\ee^{m(u^2-v^2)}u,
 \qquad
 |z_2|=\ee^{m(v^2-u^2)}v,
\end{equation}
and put
\begin{equation}\label{eq:lebrun-potential}
 \Phi_m(u,v)=u^2+v^2+m\,(u^4+v^4),
 \qquad
 \omega_m=\tfrac{i}{2}\partial\bar\partial\Phi_m .
\end{equation}
For $m=0$ this is the flat metric $g_0$.  For $m>0$ the metric $g_m$ is
complete, Ricci-flat, not flat, has the same volume form as $g_0$, and is
isometric up to scale to the Taub--NUT gravitational instanton
\cite{Hawking1977,LeBrun1991,LoiZeddaZuddas2012}.

The question whether the multiples $\alpha g_m$ are projectively induced has
a history of partial answers.  Loi, Zedda and Zuddas
\cite{LoiZeddaZuddas2012} proved that $\alpha g_m$ is not
projectively induced when $m>\alpha/2$, by restricting to the axis $z_2=0$
and reading off the coefficient of $|z_1|^4$ in Calabi's expansion.  The
axis computation goes back to Zedda's thesis
\cite[Remark~4.4.5]{Zedda2009}, which treats $m>1/2$ for $\alpha=1$, gives
the Taylor coefficients of $u^2$ as a function of $|z_1|^2$ and the first
few coefficients of the exponentiated potential, and records the
expectation that a negative coefficient occurs for every $m>0$.  Loi,
Zedda and Zuddas also conjectured in \cite{LoiZeddaZuddas2012} that
$\alpha g_m$ is not projectively induced for any $m>0$, adding that they
had computer evidence but no proof; the statement is recorded as
\cite[Conjecture~7.3.2]{LoiZedda2018}.  Zedda \cite{Zedda2021} notes that
the coefficient method reaches smaller values of $m$ but that ``it is hard
to prove it for values of $m$ approaching to $0$''.  For finite-dimensional
targets the conjecture follows from \cite{Salis2017}, since $g_m$ is
Ricci-flat and its potential depends only on $|z_1|^2$ and $|z_2|^2$; the
open case is $N=\infty$.  We settle it.

\begin{mainthm}\label{thm:A}
For every $m>0$ and every $\alpha>0$, the K\"ahler manifold
$(\C^2,\alpha g_m)$ admits no K\"ahler immersion into $(\CP^N,g_{\FS})$ for
any $N\le\infty$.  Already the restriction of $\alpha g_m$ to the axis
$\{z_2=0\}$ fails Calabi's criterion at the origin.
\end{mainthm}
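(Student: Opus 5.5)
The plan is to reduce to the axis and exploit the Lambert $W$ singularity. On $\{z_2=0\}$ we have $v=0$. Writing $x=|z_1|^2$ and $t=u^2$, the relation \eqref{eq:lebrun-coordinates} becomes $x=t\,\ee^{2mt}$. Hence $2mx=(2mt)\ee^{2mt}$, and so
\[
t=\frac{W(2mx)}{2m},
\]
where $W$ is the principal branch of Lambert $W$. The restricted potential is $\Phi_m=t+mt^2$, a function of $x$ alone, and it is real analytic on all of $[0,\infty)$. This holds because $x\mapsto t$ is the inverse of the increasing analytic map $t\mapsto t\ee^{2mt}$, whose derivative $(1+2mt)\ee^{2mt}$ is positive for $t\ge0$.

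The restriction of $\alpha g_m$ to the axis is a one-dimensional metric with radial potential $\alpha\Phi_m(x)$, centred at the origin. Its diastasis at $0$ is $\alpha\Phi_m(|z_1|^2)$, since $\Phi_m(0)=0$ and the potential is a function of $|z_1|^2$ alone, so the mixed terms vanish. Calabi's criterion for such a rotation-invariant potential says the following: the restriction is projectively induced, with $N\le\infty$, if and only if the Taylor coefficients $b_j$ of
\[
F(x)=\exp(\alpha\Phi_m(x))-1=\sum_{j\ge1}b_jx^j
\]
are all nonnegative. The local immersion then extends along $\C$ by Calabi's rigidity and analyticity, and in any case a global immersion of $\C^2$ would restrict to one of the axis. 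So it suffices to show that some $b_j<0$.

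Suppose instead that all $b_j\ge0$. The Taylor series of $F$ at $0$ has finite radius of convergence. Indeed, $W(y)$ has a square-root branch point at $y=-1/\ee$ and is analytic in the disc $|y|<1/\ee$, so $t(x)$ has radius exactly $R=1/(2m\ee)$. We then need this singularity to survive the passage to $\exp(\alpha(t+mt^2))$. Near $x=-R$, $t$ tends to $-1/(2m)$ like $c\sqrt{x+R}$ plus higher terms, with $c\ne0$. Since $\Phi_m'(t)=1+2mt$ vanishes exactly at $t=-1/(2m)$, the first-order term cancels, and we must check that the composite is still non-analytic there. Here
\[
\Phi_m=m\Bigl(t+\tfrac1{2m}\Bigr)^2-\tfrac1{4m},
\]
so $\Phi_m$ is a quadratic in $s=t+1/(2m)$. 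Near the branch point, $s$ is a series in $\sqrt{x+R}$ with odd terms. Squaring gives $s^2=c^2(x+R)+c'(x+R)^{3/2}+\dots$, and the task is to verify that some odd half-integer power survives.

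This verification is the main obstacle. I would attack it directly: by the local inversion of $y=w\ee^w$ at $w=-1$, the function $s^2$ is analytic in $x$ if and only if $s$ is an odd or even function of $\sqrt{x+R}$. Rewriting $x$ in terms of $s$ gives
\[
2mx=(2ms-1)\ee^{2ms-1},
\]
and the right-hand side is not even in $s$. Its expansion at $s=0$ is $-\ee^{-1}\bigl(1-2m^2s^2\bigr)+\tfrac{8}{3}m^3s^3\ee^{-1}+\dots$, and the cubic term is nonzero. Hence $\Phi_m$, and then $\ee^{\alpha\Phi_m}$, is singular at $x=-R$, because $\exp$ is locally biholomorphic. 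If $\ee^{\alpha\Phi_m}$ were entire the branching would be removable, so the radius of convergence of $\sum b_jx^j$ is some finite $R'\le R$.

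By the Vivanti--Pringsheim theorem, a power series with nonnegative coefficients and finite radius $R'$ has a singularity at the positive real point $x=R'$. But $F$ is real analytic on $[0,\infty)$: along the positive axis $t(x)$ continues analytically, and the continuation agrees with the series on $[0,R')$. This contradiction shows that some $b_j<0$, so the axis restriction fails Calabi's criterion at the origin, and Theorem~\ref{thm:A} follows.
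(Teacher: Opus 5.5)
Your proof is correct and has the same architecture as the paper's. You restrict to the axis and write $u^2=\Wo(2m|z_1|^2)/(2m)$. You show that the branch point at $-R=-1/(2m\ee)$ survives in $\ee^{\alpha\Phi_m}$, so the Taylor radius is finite. You then play the Vivanti--Pringsheim theorem against analyticity along the positive axis.

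The difference is in how you prove the key step, the paper's Lemma~\ref{lem:radius}.

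\emph{Your route.} You face the cancellation directly. Completing the square gives $\Phi_m=ms^2-\tfrac1{4m}$. You then exclude analyticity of $s^2$ at $-R$ by a parity argument in the uniformizer $\sqrt{x+R}$. Oddness of $s$ in $\sqrt{x+R}$ would make $x$ even in $s$, which the nonzero cubic term $\tfrac83m^3s^3\ee^{-1}$ of $(2ms-1)\ee^{2ms-1}$ forbids.

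\emph{The paper's route.} It avoids the cancellation through the mean identity $\varphi_c'=\ee^{-cU_c}$ (Lemma~\ref{lem:mean}). A continuation of $\ee^{\alpha\varphi_c}$ gives one of $\varphi_c$, then of $\varphi_c'$, which equals $\ee\neq0$ at $-R_c$. A logarithm then continues $U_c$, and $h_c(\tilde U)=X$ contradicts $h_c'(-1/c)=0$.

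\emph{What each buys.} The paper's argument needs no expansion at the branch point, only the vanishing of $h_c'$. The identity it uses is the same mean-parameter identity reused in Section~\ref{sec:nef}. Yours is more computational but more explicit: it exhibits the surviving $(x+R)^{3/2}$ singularity that also drives the coefficient asymptotics in Remark~\ref{rem:numerics}.

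\emph{Points left implicit.} Three things should be spelled out.
\begin{enumerate}
\item You must also exclude the case that $s$ is even in $\sqrt{x+R}$. It is excluded, since you noted $s=c\sqrt{x+R}+\dots$ with $c\neq0$.
\item Passing from $\ee^{\alpha\Phi_m}$ to $\Phi_m$ needs two facts. The continued function must be nonzero at $-R$; its value there is $\ee^{-\alpha/(4m)}$. And the $2\pi i/\alpha$ ambiguity of the logarithm must be fixed on the connected slit disc.
\item The remark about extending a local immersion along $\C$ by Calabi's rigidity is unnecessary. Restricting a global immersion to the axis is all you use.
\end{enumerate}
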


The proof is short and uses nothing beyond Calabi's criterion, the Lambert
$W$ function and a classical fact about power series.  On the axis $z_2=0$ the relation
\eqref{eq:lebrun-coordinates} reads $|z_1|^2=U\ee^{2mU}$ with $U=u^2$, so
$U$ is the Lambert $W$ function of $2m|z_1|^2$ up to scale.  On the closed
disc of radius $1/(2m\ee)$ its only singularity is a branch point at
$|z_1|^2=-1/(2m\ee)$, and we show that this singularity survives in
$\exp(\alpha\Phi_m)$.  Hence the Taylor series of $\exp(\alpha\Phi_m)$ in
$|z_1|^2$ has finite radius of convergence $1/(2m\ee)$, while the function
itself is real analytic on the whole half-line $[0,\infty)$.  By the
theorem of Vivanti and Pringsheim, a power series with nonnegative
coefficients is singular at the positive point of its circle of
convergence.  So the coefficients are not all nonnegative, and Calabi's
criterion fails.  The coefficient of $|z_1|^4$ is
$\tfrac{\alpha}{2}(\alpha-2m)$, which gives the case $m>\alpha/2$ of
\cite{LoiZeddaZuddas2012}.  For smaller $m$ no fixed finite set of
coefficients can decide the question, since each coefficient tends to
$\alpha^k/k!>0$ as $m\to0$; in the exact computations of
Remark~\ref{rem:numerics} the first negative coefficient appears late and is
extremely small.

The same argument applies whenever a K\"ahler manifold contains a complex
curve with a rotation-invariant induced metric whose exponentiated
potential has a finite radius of convergence but continues analytically
along the positive axis.  The mechanism is not new.  Arezzo, Loi, Placini
and Zedda \cite[Lemma~4.1 and the proof of
Theorem~1.1(3)]{ArezzoLoiPlaciniZedda2026} use it to show that a radial
projectively induced K\"ahler--Einstein metric with negative Einstein
constant is a multiple of the complex hyperbolic metric: in the remaining
case of their proof, a germ with a branch singularity on the negative real
axis continues analytically along the positive axis beyond the modulus of
that singularity, so it cannot have nonnegative Taylor coefficients.  Theorem~\ref{thm:B} isolates the mechanism
for a rotation-invariant complex curve inside an arbitrary K\"ahler
manifold, which is what the non-radial Taub--NUT metric requires; there the
curve is a coordinate axis and the branch point is that of the Lambert $W$
function.

\begin{mainthm}[Pringsheim obstruction]\label{thm:B}
Let $(M,g)$ be a K\"ahler manifold, $p\in M$, and let
$\iota\colon\{|z|^2<\rho\}\to M$, $0<\rho\le\infty$, be a holomorphic
embedding of a disc with $\iota(0)=p$ such that $\iota^*g$ has a K\"ahler
potential $\varphi(|z|^2)$, where $\varphi(0)=0$ and $\varphi$ is real
analytic on $[0,\rho)$, that is, $\varphi$ extends holomorphically to a
complex neighbourhood of $[0,\rho)$.  If the Taylor series of $\ee^{\varphi(X)}$ at
$X=0$ has radius of convergence $r<\rho$, then no neighbourhood of $p$
admits a K\"ahler immersion into $(\CP^N,g_{\FS})$ for any $N\le\infty$.
In particular $g$ is not projectively induced.
\end{mainthm}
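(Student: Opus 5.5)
The plan is to argue by contradiction, reduce the problem to the disc, and then combine Calabi's criterion with the Vivanti--Pringsheim theorem. Suppose some neighbourhood $V$ of $p$ admits a Kähler immersion $F\colon V\to\CP^N$. Shrinking the disc to $\{|z|^2<\epsilon\}$ so that $\iota$ maps it into $V$, the composite $F\circ\iota$ is a Kähler immersion of $(\{|z|^2<\epsilon\},\iota^*g)$ into $\CP^N$, since holomorphic isometric immersions compose and restrict. So it is enough to show that the rotation-invariant metric with potential $\varphi(|z|^2)$ is not projectively induced on any small disc about $0$.

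Next, I would invoke Calabi's criterion in its standard form for rotation-invariant potentials. The diastasis of $\iota^*g$ centred at $0$ is exactly $\varphi(|z|^2)$, because $\varphi(0)=0$ and $\varphi$ depends only on $|z|^2$; its holomorphic and antiholomorphic parts vanish, so the diastasis normalization holds. Calabi's theorem says the metric is induced by a map into $\CP^N$, $N\le\infty$, near $0$ if and only if $\ee^{D_0}-1$ is a positive semidefinite Hermitian form in the monomials $z^j\bar z^k$. Here $\ee^{\varphi(|z|^2)}-1=\sum_{k\ge1}a_k|z|^{2k}$ is diagonal, so the criterion reduces to $a_k\ge0$ for all $k$, where $\ee^{\varphi(X)}=\sum a_kX^k$ with $a_0=1$. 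I will state the normalization with care: for $\CP^N$ with $g_{\FS}$ the diastasis is $\log(1+\sum|w_j|^2)$, which matches the convention $\omega=\tfrac i2\partial\bar\partial\Phi$ used for $\Phi_m$. If the paper's conventions differ by a constant, the criterion becomes $\ee^{c\varphi}$ with $c>0$, and the argument below is unchanged.

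Now assume all $a_k\ge0$. The series $f(X)=\sum a_kX^k$ has radius of convergence $r<\rho$, and $r>0$ because $\varphi$ is analytic at $0$. By Vivanti--Pringsheim, $X=r$ is a singular point of $f$: there is no holomorphic function on a neighbourhood of $r$ that agrees with $f$ on $\{|X|<r\}$ near $r$. But by hypothesis $\varphi$ extends holomorphically to a connected complex neighbourhood $\Omega$ of $[0,\rho)$, and $\ee^{\varphi}$ is holomorphic on $\Omega$. Near $0$ this function coincides with $f$. The two therefore agree on the connected component of $\Omega\cap\{|X|<r\}$ that contains a small disc at $0$, and that component contains a segment $(r-\delta,r)$ approaching $r$. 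This gives a holomorphic continuation of $f$ to a neighbourhood of $r\in\Omega$, since $r<\rho$, which contradicts Pringsheim. Hence some $a_k<0$, Calabi's criterion fails, and no neighbourhood of $p$ admits a Kähler immersion. The final sentence of the statement is immediate, because a global immersion restricts to one near $p$.

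I expect the main point needing care to be the precise form of Calabi's criterion in infinite dimension: the positive semidefiniteness of $\ee^{D}-1$ as a condition on the coefficient matrix, including the role of the constant term. That reduction, together with the diagonal structure for rotation-invariant potentials, is classical (Calabi; Loi--Zedda's monograph), and I would cite it. The identity-theorem step also deserves a sentence about connectedness, which the shape of $\Omega$ as a neighbourhood of an interval makes clear, for example by taking $\Omega$ to be a thin tube around $[0,\rho)$.
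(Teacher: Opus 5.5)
Your proposal is correct and follows the paper's proof step for step: you compose with $\iota$ on a small disc, apply Calabi's criterion in its diagonal form to get $a_k\ge0$, invoke Vivanti--Pringsheim at $X=r$, and use the identity theorem on the component of $\Omega\cap\{|X|<r\}$ containing $[0,r)$. The one step you leave implicit is why agreement on a segment near $r$ gives agreement on $\Delta\cap\{|X|<r\}$ for a disc $\Delta\subset\Omega$ centred at $r$; the paper supplies it by noting that this set is convex and meets $[0,r)$.

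One caution on your aside about conventions: a different normalization constant would not leave the argument unchanged, because the hypothesis concerns the radius of $\ee^{\varphi}$ itself. For $\varphi=\tfrac12\log(1+X)$ the series of $\ee^{\varphi}$ has radius $1$, while $\ee^{2\varphi}$ is a polynomial. This is moot here, since the convention you commit to is the paper's.
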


Finally, the axis restriction has an exact meaning in statistics.  Write
the axis potential $\Phi_m|_{z_2=0}$ as a function $K(\theta)$ of
$\theta=\log|z_1|^2$.  Then $K$ is formally the cumulant function of a
natural exponential family with mean $\mu=u^2$ and variance function
$V(\mu)=\mu/(1+2m\mu)$, which behaves like the Poisson variance function
$\mu$ near $\mu=0$ and tends to the constant $1/(2m)$ as $\mu\to\infty$.  For a
torus-invariant metric, Calabi's criterion asks precisely that the
exponentiated potential, as a function of $\theta_j=\log|z_j|^2$, be the
Laplace transform of a positive measure on the lattice $\N_0^n$.  The
argument of Theorem~\ref{thm:A} therefore gives an elementary proof of the
following known fact, which excludes positive measures of any kind.

\begin{maincor}[Bar-Lev, Bshouty and Enis; Letac and Mora]\label{cor:C}
Let $c>0$.  There is no natural exponential family on $\R$ whose mean
domain is $(0,\infty)$ and whose variance function is
$V(\mu)=\mu/(1+c\mu)$; more generally, no natural exponential family has
this variance function on an open interval of positive means.
Equivalently, there is no positive measure on $\R$ whose Laplace transform
equals $\exp K_c$ on a nonempty open interval, where
$K_c(\theta)=p+\tfrac c2p^2$ with $p=c^{-1}\Wo(c\ee^{\theta})$.
\end{maincor}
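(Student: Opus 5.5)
Suppose, for contradiction, that a natural exponential family exists, generated by a positive measure $\nu$ on $\R$, whose variance function is $V(\mu)=\mu/(1+c\mu)$ on some open interval $I\subset(0,\infty)$ of means. The plan is to pass from $\nu$ to the cumulant function $K_c$, and then to run the Vivanti--Pringsheim argument of Theorem~\ref{thm:A} on the Laplace transform of $\nu$.

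\emph{Step 1: recover $K_c$ from the variance function.} Let $k(\theta)=\log\int \ee^{\theta x}\,\dd\nu(x)$ on the interior $\Theta$ of its domain. Then $\mu=k'(\theta)$ and $V(\mu)=k''(\theta)$, so on the corresponding $\theta$-interval $J$ we have $\dd\theta/\dd\mu=1/V(\mu)=1/\mu+c$. Integrating gives $\theta=\log\mu+c\mu+\text{const}$, that is, $\mu\ee^{c\mu}=C\ee^{\theta}$ with $C>0$. Hence $\mu=c^{-1}\Wo(cC\ee^{\theta})$. Integrating $k'=\mu$ once more gives $k=K_c(\theta+\log C)+\text{const}$; the derivative check $\frac{\dd}{\dd\theta}(p+\tfrac c2p^2)=p'(1+cp)=p$ uses $p'=p/(1+cp)$. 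Shifting $\theta$ and rescaling $\nu$ by positive constants, we may assume that $\int\ee^{\theta x}\,\dd\nu=\exp K_c(\theta)$ on an open interval $J$. This also shows the ``equivalently'' clause.

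\emph{Step 2: reduce to a power series in $X=\ee^\theta$.} The function $F(X)=\exp(p+\tfrac c2p^2)$ with $p=c^{-1}\Wo(cX)$ is exactly the axis function $\exp(\Phi_m)$ with $c=2m$, $\alpha=1$. From Theorem~\ref{thm:A}'s analysis we know two things about $F$: it is real analytic on all of $(-1/(c\ee),\infty)$, and its Taylor series at $0$ has radius exactly $1/(c\ee)$ and not all coefficients nonnegative. The laplace transform $L(\theta)=\int\ee^{\theta x}\,\dd\nu$ is analytic in the strip $\{\operatorname{Re}\theta\in\Theta\}$. Because $\exp K_c$ extends analytically along the whole real line, the standard fact that a Laplace transform is singular at the endpoints of its real convergence interval, whenever those endpoints are finite, can be applied. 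Combined with analytic continuation from $J$, this gives $\Theta=\R$ and $L=\exp K_c$ on $\R$.

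\emph{Step 3: support and coefficients.} As $\theta\to-\infty$, $L(\theta)\to1$, since $p\to0$. Any mass of $\nu$ on $(-\infty,0)$ would force $L\to\infty$, so $\nu$ is supported on $[0,\infty)$ and $\nu(\{0\})=1$. Next I would show that $\nu$ lives on $\N_0$. The function $L(\theta+2\pi i)$ is $\int\ee^{\theta x}\ee^{2\pi i x}\,\dd\nu$. On the other hand, $\exp K_c(\theta+i t)$ near $\theta\to-\infty$ is given by the convergent series $\sum a_n\ee^{n(\theta+it)}$. By uniqueness of Laplace transforms on vertical lines (Fourier uniqueness for the finite measure $\ee^{\theta x}\nu$), $\nu=\sum a_n\delta_n$. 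Hence $a_n\ge0$ for all $n$, so $F(X)=\sum a_nX^n$ converges for every $X>0$. This contradicts the finite radius $1/(c\ee)$ established for Theorem~\ref{thm:A}; alternatively, it contradicts the existence of a negative coefficient. Either way, no such family exists.

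The main obstacle is Step 2, namely pushing the domain of $L$ from $J$ to all of $\R$. I would first try the Pringsheim-type fact for Laplace transforms of positive measures (Widder's theorem), namely that the right endpoint of the convergence interval is a singularity. Since $\exp K_c$ is entire in the real direction, this forces $\Theta=\R$. Step 3 then only needs Fourier uniqueness.
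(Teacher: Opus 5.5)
Your proof is correct and follows essentially the paper's route: integrating $1/V$ identifies the cumulant function with $\varphi_c(\ee^{\theta})$ up to tilting and normalization, Landau's theorem (as in Remark~\ref{rem:landau}) forces $\Theta=\R$, a uniqueness theorem puts $\nu$ on $\N_0$ with the Taylor coefficients of $G_c$ as weights, and Lemma~\ref{lem:radius} gives the contradiction. The differences are only technical: you get $\supp\nu\subset[0,\infty)$ from $L(\theta)\to1$ as $\theta\to-\infty$ rather than from steepness, and the lattice support from Fourier uniqueness on vertical lines rather than from uniqueness of real Laplace--Stieltjes transforms; for the latter you should state explicitly that $L(s)=G_c(\ee^{s})$ on the half-plane $\operatorname{Re}s<\log R_c$, by the identity theorem.
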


Corollary~\ref{cor:C} itself is not new.  In the full-mean-domain case it
is the instance $P(\mu)=\mu$, $Q(\mu)=1+c\mu$ of an example of Bar-Lev,
Bshouty and Enis
\cite[Remark~4.1 and Example~(f)(i)]{BarLevBshoutyEnis1991}.  They show
that a rational function $V=P/Q$ that is positive on $(0,\infty)$, with $P$
and $Q$ coprime, $P(0)=0$, $\deg P\le\deg Q+1$ and $Q$ without zeros on
$(0,\infty)$, is the variance function of a family with mean domain
$(0,\infty)$ only if it is quadratic, which under these hypotheses means
$V(\mu)=a\mu$, the Poisson case; this extends the earlier treatment by
Bar-Lev and Bshouty \cite{BarLevBshouty1989} of rational variance functions
on bounded mean domains that vanish at the endpoints.  Their proof
starts, as ours does, from the divergence of $\int\dd\mu/V$ at both ends of
the mean domain, which forces the natural domain to be $\R$; the mean
function is then meromorphic in $\C$, and a Nevanlinna-theoretic argument
\cite[Theorem~4.1]{BarLevBshoutyEnis1991} leaves only quadratic variance
functions.  The extension to an open
interval of positive means follows from the maximality of the mean domain
proved by Letac and Mora \cite[Theorem~3.1]{LetacMora1990}.  The
full-mean-domain case $c=1$ was also found by Bryc and Ismail
\cite[Remark~2.6]{BrycIsmail2005} through an explicit negative
coefficient.

We nevertheless include a self-contained proof, because it is elementary
and makes the link with Theorem~\ref{thm:A} exact: the mean function is
$c^{-1}\Wo(c\ee^{\theta})$, and the branch point of the Lambert $W$
function at $\ee^{\theta}=-1/(c\ee)$ is incompatible with a positive
representing measure on a full natural domain, which is precisely the
failure of Calabi's criterion in Theorem~\ref{thm:A}.  For $c=2m$ the
function $1/V(\mu)=1/\mu+c$ is exactly the Gibbons--Hawking harmonic
function of Taub--NUT along the axis, and its constant term is the value
at infinity that makes Taub--NUT asymptotically locally flat;
Section~\ref{sec:remarks} explains this identity and the scope of the
method.

Section~\ref{sec:prelim} recalls Calabi's criterion, the
Vivanti--Pringsheim theorem and the facts about the Lambert $W$ function
that we use.  Section~\ref{sec:proof} proves Theorem~\ref{thm:A},
Section~\ref{sec:pringsheim} proves Theorem~\ref{thm:B},
Section~\ref{sec:nef} proves Corollary~\ref{cor:C}, and
Section~\ref{sec:remarks} discusses the Gibbons--Hawking picture and the
scope of the method.  Section~\ref{sec:nef} uses nothing from the geometric
part except the facts about $\Wo$ in Section~\ref{subsec:lambert}, the
definitions \eqref{eq:Uc} and Lemmas~\ref{lem:mean} and~\ref{lem:radius},
so readers mainly interested in exponential families can read
Section~\ref{subsec:lambert} and Section~\ref{sec:proof} up to
Lemma~\ref{lem:radius} and then go straight to Section~\ref{sec:nef}.

\section{Preliminaries}\label{sec:prelim}

The proofs combine three classical ingredients.  Calabi's criterion turns
the existence of a K\"ahler immersion into a positivity condition on Taylor
coefficients.  The Vivanti--Pringsheim theorem forces a power series with
nonnegative coefficients and finite radius of convergence to be singular on
the positive axis.  The Lambert
$W$ function describes the Taub--NUT potential on a coordinate axis.  We
recall them in turn.

\subsection{Calabi's criterion}

The criterion is phrased through Calabi's diastasis, a K\"ahler potential
that is canonically normalized at a given point.  Let $(M,g)$ be a K\"ahler
manifold with real analytic metric, $p\in M$, and
let $\Phi$ be a real analytic K\"ahler potential near $p$, so that the
K\"ahler form is $\frac i2\partial\bar\partial\Phi$.  Polarize $\Phi$ to a
holomorphic function $\hat\Phi(z,\bar w)$ of the coordinates of $z$ and the
conjugate coordinates of $w$.  Calabi's diastasis is
\begin{equation}\label{eq:diastasis}
 D_p(z)=\hat\Phi(z,\bar z)+\hat\Phi(p,\bar p)-\hat\Phi(z,\bar p)-\hat\Phi(p,\bar z);
\end{equation}
it depends on the metric alone.  In local coordinates centred at $p$, expand
\begin{equation}\label{eq:calabi-matrix}
 \ee^{D_p(z)}-1=\sum_{j,k}B_{jk}\,z^{j}\bar z^{k}
\end{equation}
over multi-indices.  The metric is \emph{$1$-resolvable of rank at most
$N$ at $p$} if the Hermitian matrix $(B_{jk})$ is positive semidefinite of
rank at most $N$.

\begin{theorem}[Calabi {\cite{Calabi1953}}; see
{\cite[Lemma~2.1]{LoiSalisZuddas2018}}, {\cite[Chapter~2]{LoiZedda2018}}]
\label{thm:calabi}
A neighbourhood of $p$ admits a K\"ahler immersion into $(\CP^N,g_{\FS})$,
$N\le\infty$, if and only if $g$ is $1$-resolvable of rank at most $N$ at
$p$.
\end{theorem}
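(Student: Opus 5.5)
We plan to prove the two implications separately, working in coordinates $z$ centred at $p$ and using two facts about the diastasis. First, $D_p$ is well defined, since two real analytic potentials differ by a pluriharmonic function $h+\bar h$, whose polarized contribution cancels in \eqref{eq:diastasis}. Second, the diastasis is \emph{hereditary}: if $f$ is a holomorphic map with $f^*\omega'=\omega$, then $D_p=D'_{f(p)}\circ f$. The hereditary property holds because polarization commutes with composition by holomorphic maps, so $\hat\Phi'(f(z),\overline{f(w)})$ is a polarized potential for $\omega$. For $(\CP^N,g_{\FS})$ we take the normalization in which $\omega_{\FS}=\frac i2\partial\bar\partial\log(1+\sum|w_j|^2)$ in affine coordinates centred at $[1:0:\dots]$. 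In these coordinates the diastasis at that point is exactly $\log(1+\sum_j|w_j|^2)$, since the potential vanishes identically on both mixed slots $w=0$ and $\bar w=0$.

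\emph{Necessity.} Let $f$ be a K\"ahler immersion of a neighbourhood of $p$. After a unitary change of homogeneous coordinates we may assume $f(p)=[1:0:\dots]$, so near $p$ we can write $f=[1:f_1:f_2:\dots]$ with $f_i(0)=0$ and $\sum_i|f_i|^2<\infty$. By heredity,
\[
\ee^{D_p(z)}-1=\sum_{i}|f_i(z)|^2 .
\]
Expand $f_i(z)=\sum_k a_{ik}z^k$ and compare coefficients of $z^j\bar z^k$; the rearrangement is justified by absolute convergence on a small polydisc. This gives $B_{jk}=\sum_i \overline{a_{ij}}\,a_{ik}$, up to the ordering convention for $j$ and $k$. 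Hence $B=A^*A$ is a Gram matrix, so it is positive semidefinite, and its rank is at most the number $N$ of components.

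\emph{Sufficiency.} Suppose $B$ is positive semidefinite of rank $r\le N$. Then $B$ is the Gram matrix of vectors $v_k$ in a Hilbert space $H$ of dimension $r$: take $H$ to be the completion of finitely supported sequences modulo null vectors under the form $B$. Identify $H$ with $\C^r$ or $\ell^2$ and define
\[
F(z)=\sum_k z^k v_k\in H .
\]
The key analytic step, and the main obstacle in the infinite-dimensional case, is to show that $F$ converges and is holomorphic near $0$. For convergence, real analyticity of $\Phi$ implies that the polarized series $\sum B_{jk}z^j\bar w^k$ converges absolutely on a small polydisc. In particular the diagonal quantities $\|v_k\|^2=B_{kk}$ satisfy $\sum_k B_{kk}|z|^{2k}<\infty$ there, after shrinking the radius, so the series for $F$ converges absolutely in norm. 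Holomorphy then follows, because each coordinate functional is a convergent power series and $F$ is locally bounded. With this in hand, $\|F(z)\|^2=\ee^{D_p(z)}-1$. Setting $f=[1:F]$, we obtain
\[
f^*\omega_{\FS}=\tfrac i2\partial\bar\partial\log(1+\|F\|^2)=\tfrac i2\partial\bar\partial D_p=\omega ,
\]
since $D_p$ differs from $\Phi$ by a pluriharmonic function. Finally, $f$ is an immersion because the pulled-back form is the nondegenerate K\"ahler form $\omega$, and its image lies in $\CP^r\subset\CP^N$.
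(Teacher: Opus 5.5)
The paper does not prove this theorem; it cites \cite{Calabi1953} and \cite[Chapter~2]{LoiZedda2018}, and your argument is the standard one from those sources: heredity of the diastasis, $\log(1+\sum_j|w_j|^2)$ as the Fubini--Study diastasis at $[1:0:\dots]$, and a Gram factorization of $(B_{jk})$ in a Hilbert space of dimension $\operatorname{rank}B$. It is correct, including your check that $\sum_k z^k v_k$ converges in norm after shrinking the polydisc. The one step worth stating explicitly is in the necessity direction for $N=\infty$: to interchange $\sum_i$ with the $(j,k)$-expansion you need the norm-convergent $\ell^2$-valued expansion $F(z)=\sum_k z^kA_k$ with Cauchy bounds $\|A_k\|\le C\,r^{-|k|}$, not just the convergence of each $f_i$ separately.
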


Two consequences are used below.  First, if $F\colon(M,g)\to\CP^N$ is a
K\"ahler immersion and $S\subset M$ is a complex submanifold, then $F|_S$
is a K\"ahler immersion of $(S,g|_S)$, so $g|_S$ is $1$-resolvable at every
point of $S$.  Second, in one complex variable, if the potential has the
form $\varphi(|z|^2)$ with $\varphi$ real analytic near $0$ and
$\varphi(0)=0$, then $\hat\Phi(z,\bar w)=\varphi(z\bar w)$, so
\eqref{eq:diastasis} at $p=0$ gives $D_0(z)=\varphi(|z|^2)$, and
\eqref{eq:calabi-matrix} reads
\begin{equation}\label{eq:radial-matrix}
 \ee^{\varphi(|z|^2)}-1=\sum_{k\ge1}c_k\,|z|^{2k},
 \qquad
 B_{jk}=c_k\,\delta_{jk}.
\end{equation}
Thus $1$-resolvability at the origin is equivalent to $c_k\ge0$ for all
$k\ge1$, where $c_k$ are the Taylor coefficients of
$X\mapsto\ee^{\varphi(X)}$ at $X=0$.  This is the form of the criterion used
in \cite{LoiZeddaZuddas2012}.

\subsection{The Vivanti--Pringsheim theorem}

For a rotation-invariant potential in one variable, Calabi's criterion asks
that a power series have nonnegative coefficients, as in
\eqref{eq:radial-matrix}.  The following classical theorem is the only fact
about such series that we need.

\begin{theorem}[Vivanti \cite{Vivanti1893}, Pringsheim \cite{Pringsheim1894}; see
{\cite[\S7.21]{Titchmarsh1939}}, {\cite[Theorem~IV.6]{FlajoletSedgewick2009}}]
\label{thm:pringsheim}
Let $\sum_{k\ge0}c_kX^k$ be a power series with real coefficients
$c_k\ge0$ and radius of convergence $R\in(0,\infty)$.  Then $X=R$ is a
singular point of its sum: the sum has no analytic continuation to any
neighbourhood of $R$.
\end{theorem}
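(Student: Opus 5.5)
The statement to be proved at this point is Theorem~\ref{thm:pringsheim} itself, the Vivanti--Pringsheim theorem. The plan is the classical argument by contradiction: assume the sum $f(X)=\sum c_kX^k$ continues analytically to a disc around $X=R$, and deduce that the Taylor series at a real point $a\in(0,R)$ converges beyond $R$. Nonnegativity of the coefficients then lets us rearrange this into convergence of the original series at a real point $R+\delta>R$, which contradicts the definition of $R$.

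First, if $f$ continues analytically to a neighbourhood of $R$, then $f$ is analytic on an open set containing the closed disc $\{|X-a|\le R-a+2\delta\}\setminus\{|X|\ge R\}$ near the real axis. Concretely, I would fix $a=R/2$ and find $\delta>0$ such that $f$ extends holomorphically to the open disc $|X-a|<R-a+2\delta$. This disc is covered by the original disc $|X|<R$ together with the neighbourhood of $R$, as long as $\delta$ is small. In particular the Taylor series of $f$ at $a$ converges at $X=R+\delta$:
\[
 f(R+\delta)=\sum_{n\ge0}\frac{f^{(n)}(a)}{n!}(R+\delta-a)^n .
\]

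This covering step is the only delicate point, and I expect it to be the main obstacle. For the choice $a=R/2$, the points of the circle $|X-a|=R-a$ other than $X=R$ lie strictly inside $|X|<R$. By compactness, a slightly larger disc $|X-a|<R-a+2\delta$ lies in the union of $\{|X|<R\}$ and the given neighbourhood of $R$. One also has to check that the continuation is single-valued on this union. That holds because the intersection of the neighbourhood (a small disc around $R$) with $\{|X|<R\}$ is connected.

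Next, inside $|X|<R$ we may differentiate termwise:
\[
 \frac{f^{(n)}(a)}{n!}=\sum_{k\ge n}\binom kn c_k a^{k-n}.
\]
Substituting this gives a double series
\[
 \sum_n\sum_{k\ge n}\binom kn c_k a^{k-n}(R+\delta-a)^n ,
\]
all of whose terms are nonnegative. By Tonelli's theorem for series of nonnegative terms we may swap the order of summation. Summing over $n$ first with the binomial theorem yields
\[
 \sum_k c_k\bigl(a+(R+\delta-a)\bigr)^k=\sum_k c_k(R+\delta)^k<\infty .
\]
A power series converging at the positive real point $R+\delta$ has radius of convergence at least $R+\delta$, contradicting the assumption that the radius is exactly $R$. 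Hence $X=R$ is a singular point of the sum.
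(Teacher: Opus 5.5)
Your proof is correct and is the classical argument (re-expanding at the interior point $a=R/2$, then swapping the order of summation by Tonelli because all terms are nonnegative) given in Titchmarsh \S7.21 and Flajolet--Sedgewick Theorem~IV.6; the paper cites these sources and gives no proof of its own. The only blemish is the garbled opening sentence of your second paragraph, about the set $\{|X-a|\le R-a+2\delta\}\setminus\{|X|\ge R\}$, but the ``Concretely'' sentence that follows states the intended claim correctly, and your compactness and connectedness justification of it is sound.
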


\subsection{The Lambert \texorpdfstring{$W$}{W} function}\label{subsec:lambert}

On the axis $z_2=0$ of Taub--NUT, recovering $u^2$ from $|z_1|^2$ amounts
to inverting $w\mapsto w\ee^{w}$ after a rescaling; see \eqref{eq:axis}
below.  Let $\Wo$ denote the principal branch of the Lambert $W$ function, the
inverse of $w\mapsto w\ee^{w}$ that is real and increasing on
$[-1/\ee,\infty)$ with $\Wo(0)=0$.  We use the following facts from
\cite[Sections~3--4]{CorlessGonnetHareJeffreyKnuth1996}.

\begin{enumerate}
 \item $\Wo$ is holomorphic on $\C\setminus(-\infty,-1/\ee]$ and continuous
 on $\C\setminus(-\infty,-1/\ee)$, with $\Wo(-1/\ee)=-1$.  In particular
 $\Wo$ is holomorphic on the open disc $|y|<1/\ee$, continuous on the
 closed disc, and real analytic on $(-1/\ee,\infty)$.
 \item The Taylor series at the origin is
 $\Wo(y)=\sum_{n\ge1}(-n)^{n-1}y^n/n!$, with radius of convergence $1/\ee$.
 \item $y=-1/\ee$ is a branch point: the derivative of $w\mapsto w\ee^w$
 vanishes at $w=-1$, and $\Wo$ has no analytic continuation to any
 neighbourhood of $-1/\ee$.
\end{enumerate}

We only need the third item in the following elementary form.  If $h(w)=w\ee^{w}$
and $W$ is holomorphic on an open set $\Omega\ni y_0$ with $h(W(y))=y$ on
$\Omega$, then differentiating gives $h'(W(y))W'(y)=1$, so $h'(W(y))\ne0$
on $\Omega$; hence $W(y_0)\ne-1$.

\section{Proof of Theorem~\ref{thm:A}}\label{sec:proof}

The proof has two steps.  We first express the axis potential through the
Lambert $W$ function and show that the branch point of $\Wo$ survives
exponentiation (Lemmas~\ref{lem:mean} and~\ref{lem:radius}).  Calabi's
criterion and the Vivanti--Pringsheim theorem then turn this singularity
into a negative Taylor coefficient.  We work with a general parameter
$c>0$, because Section~\ref{sec:nef} uses the same functions; the axis of
Taub--NUT is the case $c=2m$.

Fix $c>0$.  Set
\begin{equation}\label{eq:Uc}
 R_c=\frac1{c\ee},
 \qquad
 U_c(X)=\frac{\Wo(cX)}{c},
 \qquad
 \varphi_c(X)=U_c(X)+\frac c2\,U_c(X)^2 .
\end{equation}
By the properties of $\Wo$, the function $U_c$ is holomorphic on
$\C\setminus(-\infty,-R_c]$, in particular on the disc
$D_c=\{|X|<R_c\}$ and near every point of $(-R_c,\infty)$, and it is
continuous on $\overline{D_c}$ with $U_c(-R_c)=-1/c$.  It satisfies
\begin{equation}\label{eq:inverse}
 U_c(X)\,\ee^{cU_c(X)}=X ,
\end{equation}
and on $[0,\infty)$ it is the inverse of the increasing bijection
$h_c(U)=U\ee^{cU}$ of $[0,\infty)$ onto itself.  The function $\varphi_c$
has the same regularity.

For $m>0$ the restriction of LeBrun's potential to the axis $z_2=0$ is,
by \eqref{eq:lebrun-coordinates}--\eqref{eq:lebrun-potential} with $v=0$
and $U=u^2$,
\begin{equation}\label{eq:axis}
 |z_1|^2=U\ee^{2mU},
 \qquad
 \Phi_m|_{z_2=0}=U+mU^2=\varphi_{2m}(|z_1|^2),
\end{equation}
which is the setting of the axis computation in \cite{LoiZeddaZuddas2012}.  Thus the axis
is the case $c=2m$ of \eqref{eq:Uc}.  The whole argument rests on the
following formula for the derivative of $\varphi_c$.

\begin{lemma}[mean identity]\label{lem:mean}
On $\C\setminus(-\infty,-R_c]$,
\begin{equation}\label{eq:mean}
 \varphi_c'(X)=\ee^{-cU_c(X)}=\frac{U_c(X)}{X}\qquad(X\ne0).
\end{equation}
Consequently $\varphi_c'$ is holomorphic on $D_c$, bounded and nowhere zero
on $D_c$, and real analytic on $(-R_c,\infty)$.
\end{lemma}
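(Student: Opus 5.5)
The plan is to differentiate the defining identity \eqref{eq:inverse} implicitly and then read off the chain rule for $\varphi_c=U_c+\frac c2U_c^2$. Throughout we work on $\Omega_c=\C\setminus(-\infty,-R_c]$, where $U_c$ is holomorphic.

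\textbf{Step 1: derivative of $U_c$.} Differentiating $U_c\ee^{cU_c}=X$ gives
\[
U_c'(X)\,(1+cU_c(X))\,\ee^{cU_c(X)}=1 .
\]
By the elementary form of the branch-point fact in Section~\ref{subsec:lambert}, applied with $y=cX$ and $W=\Wo$, we have $1+cU_c\ne0$ on $\Omega_c$, that is $U_c\ne-1/c$ there. Hence
\[
U_c'=\frac{\ee^{-cU_c}}{1+cU_c}.
\]

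\textbf{Step 2: derivative of $\varphi_c$.} By the chain rule,
\[
\varphi_c'=U_c'(1+cU_c)=\ee^{-cU_c}.
\]
For $X\ne0$, dividing \eqref{eq:inverse} by $X\,\ee^{cU_c}$ gives $\ee^{-cU_c}=U_c/X$. This proves \eqref{eq:mean}.

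\textbf{Step 3: the consequences.} Since $\varphi_c'=\ee^{-cU_c}$, it is holomorphic wherever $U_c$ is. In particular it is holomorphic on $D_c\subset\Omega_c$, and real analytic on $(-R_c,\infty)$ because $U_c$ is real-valued and real analytic there. It is nowhere zero, being an exponential. For boundedness on $D_c$, note that $U_c$ is continuous on the compact set $\overline{D_c}$, hence bounded there. Therefore $|\ee^{-cU_c}|=\ee^{-c\,\mathrm{Re}\,U_c}$ is bounded on $D_c$.

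There is no real obstacle here. The one point needing care is Step 1: we must justify that $1+cU_c$ does not vanish, so that we can divide by it. The branch-point remark supplies exactly this. Alternatively, since $\varphi_c'\,(1+cU_c)=\ee^{-cU_c}(1+cU_c)$, the identity $\varphi_c'=\ee^{-cU_c}$ can be obtained without any division, simply by comparing the two sides after multiplying through.
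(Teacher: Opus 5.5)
Your proof is correct and follows the paper's argument essentially verbatim: implicit differentiation of $U_c\ee^{cU_c}=X$, the chain rule $\varphi_c'=(1+cU_c)U_c'$, nonvanishing of $1+cU_c$ from the branch-point remark, and boundedness from continuity of $U_c$ on $\overline{D_c}$. One small correction to your closing aside: as written, passing from $\varphi_c'(1+cU_c)=\ee^{-cU_c}(1+cU_c)$ to $\varphi_c'=\ee^{-cU_c}$ still requires cancelling $1+cU_c$. The genuinely division-free route is to multiply the differentiated identity by $\ee^{-cU_c}$, which gives $\varphi_c'=(1+cU_c)U_c'=\ee^{-cU_c}$ directly.
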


\begin{proof}
Differentiating \eqref{eq:inverse} gives $U_c'(X)\,(1+cU_c)\,\ee^{cU_c}=1$,
so $U_c'=\ee^{-cU_c}/(1+cU_c)$ wherever $1+cU_c\ne0$, which holds on
$\C\setminus(-\infty,-R_c]$ since $\Wo\ne-1$ there.  Hence
$\varphi_c'=(1+cU_c)U_c'=\ee^{-cU_c}$, and $\ee^{-cU_c}=U_c/X$ by
\eqref{eq:inverse}.  Boundedness follows from the boundedness of $U_c$ on
$\overline{D_c}$.
\end{proof}

In statistical language, \eqref{eq:mean} says that with $X=\ee^{\theta}$ the
derivative of $\theta\mapsto\varphi_c(\ee^\theta)$ is $U_c$: the mean
parameter of the axis family is LeBrun's $u^2$.

The next lemma is the heart of the proof: the branch point of $U_c$ at
$-R_c$ survives in $\exp(\alpha\varphi_c)$.  The idea is that a
continuation of $\exp(\alpha\varphi_c)$ to a neighbourhood of $-R_c$ would,
through \eqref{eq:mean}, give one of $U_c$, which is impossible because
$U_c$ inverts a map with a critical point at $-1/c$.

\begin{lemma}[radius of convergence]\label{lem:radius}
Let $\alpha>0$ and $G_c=\ee^{\alpha\varphi_c}$.  Then $G_c$ is holomorphic
on $\C\setminus(-\infty,-R_c]$ and has no analytic continuation to any
neighbourhood of $-R_c$.  Consequently the Taylor series of $G_c$ at
$X=0$ has radius of convergence exactly $R_c$.
\end{lemma}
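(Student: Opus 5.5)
Holomorphy of $G_c$ on $\C\setminus(-\infty,-R_c]$ is immediate, since $\varphi_c$ is holomorphic there (it is a polynomial in $U_c$) and we only exponentiate. The substance is the non-continuability at $-R_c$. I will argue by contradiction. Suppose $G_c$ extends holomorphically to a disc $B$ centred at $-R_c$. Since $G_c$ is nowhere zero on $\C\setminus(-\infty,-R_c]$, I would first shrink $B$ and use continuity of $U_c$ on $\overline{D_c}$ to check that the extension satisfies $|G_c(-R_c)|=\ee^{\alpha\operatorname{Re}\varphi_c(-R_c)}\ne0$. Here $\varphi_c(-R_c)=-1/c+\tfrac{1}{2c}=-\tfrac1{2c}$, because $U_c\to-1/c$ as $X\to-R_c$ within $\overline{D_c}$. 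So the extension is nonvanishing on a smaller disc $B'$.

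Next I form the logarithmic derivative $H=G_c'/(\alpha G_c)$, which is holomorphic on $B'$. On $B'\cap D_c$ it equals $\varphi_c'$, so by Lemma~\ref{lem:mean} it equals $U_c(X)/X$. Hence $\widetilde U(X)=X\,H(X)$ is holomorphic on $B'$ and agrees with $U_c$ on $B'\cap D_c$. It also agrees with $U_c$ on $B'$ minus the cut, since the set $B'\setminus(-\infty,-R_c]$ is connected and contains an open piece of $D_c$. This avoids taking logarithms of $G_c$, so no branch issue arises.

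Now $\widetilde U$ is a holomorphic continuation of $U_c$ to a neighbourhood of $-R_c$. The identity \eqref{eq:inverse}, $\widetilde U\,\ee^{c\widetilde U}=X$, holds on an open subset of $B'$, hence on all of $B'$ by the identity theorem. By continuity, $\widetilde U(-R_c)=\lim U_c=-1/c$. Rescaling $W(y)=c\,\widetilde U(y/c)$ gives a holomorphic local inverse of $h(w)=w\ee^w$ near $y_0=-1/\ee$ with $W(y_0)=-1$. This contradicts the elementary form of item 3 in Section~\ref{subsec:lambert}.

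For the radius: $G_c$ is holomorphic on $D_c$, so the radius is at least $R_c$. If it were larger, the Taylor series would give a holomorphic function on a disc properly containing $\overline{D_c}$, in particular near $-R_c$. That function coincides with $G_c$ on $D_c$, so it would be a continuation of $G_c$ to a neighbourhood of $-R_c$, which we have just excluded. Hence the radius equals $R_c$.

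The main obstacle is passing from a continuation of $G_c$ to one of $U_c$. The key step is the nonvanishing of $G_c$ at $-R_c$, which makes $G_c'/G_c$ holomorphic there, combined with the mean identity $\varphi_c'=U_c/X$. The latter lets me recover $U_c$ without dividing by anything that could vanish, since $X=-R_c\ne0$ in any case.
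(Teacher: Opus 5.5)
Your proof is correct and follows the paper's argument step for step: nonvanishing of the continuation at $-R_c$, transfer of the continuation to $U_c$ through the mean identity of Lemma~\ref{lem:mean}, the contradiction with the critical point of $h_c$ at $-1/c$, and the same radius argument. The only difference is technical: you reach $U_c$ through the logarithmic derivative $\tilde G'/(\alpha\tilde G)$ and the form $\varphi_c'=U_c/X$, whereas the paper takes a holomorphic logarithm of $\tilde G$ and then a second one of $\varphi_c'=\ee^{-cU_c}$, so your version avoids the branch normalization and the second nonvanishing check.
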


\begin{proof}
Holomorphy on $\C\setminus(-\infty,-R_c]$ is inherited from $U_c$.  Suppose $\tilde G$ is holomorphic on a disc $\Delta$ centred at
$-R_c$ and agrees with $G_c$ on $\Delta\cap D_c$.  Since $\varphi_c$ is bounded
on $D_c$, $|G_c|$ is bounded below by a positive constant on $D_c$, so
$\tilde G(-R_c)\ne0$.  Shrinking $\Delta$, the function $\tilde G$ has no zeros
on $\Delta$, so it has a holomorphic logarithm on $\Delta$.  On the connected set
$\Delta\cap D_c$ the difference $\alpha^{-1}\log\tilde G-\varphi_c$ is
continuous with values in $(2\pi i/\alpha)\mathbb Z$, hence constant, and
we choose the logarithm so that it vanishes.  Thus $\varphi_c$
extends holomorphically to $\Delta$, hence so does $\varphi_c'$, and by
Lemma~\ref{lem:mean} the extension of $\varphi_c'$ is nowhere zero on $\Delta$
after shrinking $\Delta$ once more, since it is continuous and equals
$\ee^{-cU_c(-R_c)}=\ee$ at the centre.

The same argument applied to
$\varphi_c'=\ee^{-cU_c}$ shows that
$U_c$ extends to a holomorphic function $\tilde U$
on $\Delta$, and $\tilde U(-R_c)=U_c(-R_c)=-1/c$ by continuity of $U_c$ on
$\overline{D_c}$.  Applying
$h_c$, we get $h_c(\tilde U(X))=X$ on $\Delta\cap D_c$, hence on $\Delta$.
Differentiating at $X=-R_c$ gives $h_c'(-1/c)\,\tilde U'(-R_c)=1$, which is
impossible because $h_c'(U)=(1+cU)\ee^{cU}$ vanishes at $U=-1/c$.  This
proves that $G_c$ has no analytic continuation to a neighbourhood of
$-R_c$.

The Taylor series of $G_c$ at $0$ converges on $D_c$, so its radius is at
least $R_c$.  If the radius were larger, the sum of the series would be an
analytic continuation of $G_c$ to a neighbourhood of $-R_c$.  Hence the
radius equals $R_c$.
\end{proof}

\begin{proof}[Proof of Theorem~\ref{thm:A}]
Let $m>0$, $\alpha>0$, and suppose that $F\colon(\C^2,\alpha g_m)\to\CP^N$
is a K\"ahler immersion for some $N\le\infty$.  The axis $S=\{z_2=0\}$ is a
complex submanifold, and by \eqref{eq:axis} the induced metric
$\alpha g_m|_S$ has the K\"ahler potential $\alpha\varphi_{2m}(|z_1|^2)$,
which is real analytic on $S$ and vanishes at the origin.  By
Theorem~\ref{thm:calabi} applied to $F|_S$ and by
\eqref{eq:radial-matrix}, the Taylor coefficients $c_k$ of
$G_{2m}(X)=\ee^{\alpha\varphi_{2m}(X)}$ at $X=0$ satisfy $c_k\ge0$ for
all $k\ge1$; also $c_0=1$.  By Lemma~\ref{lem:radius} the series
$\sum c_kX^k$ has radius of convergence $R_{2m}=1/(2m\ee)\in(0,\infty)$.
Theorem~\ref{thm:pringsheim} then says that its sum has no analytic
continuation to a neighbourhood of $X=R_{2m}$.  But the sum equals $G_{2m}$
on $D_{2m}$, and by Lemma~\ref{lem:radius} $G_{2m}$ is holomorphic on
$\C\setminus(-\infty,-R_{2m}]$, which contains a disc centred at $R_{2m}$;
so $G_{2m}$ continues the sum analytically across $R_{2m}$.  This
contradiction shows that no such $F$ exists, and that already
$\alpha g_m|_S$ is not $1$-resolvable at the origin.
\end{proof}

\begin{remark}\label{rem:numerics}
The coefficient of $|z_1|^4$ in $\ee^{\alpha\varphi_{2m}}$ is
$\frac\alpha2(\alpha-2m)$, as in \cite{LoiZeddaZuddas2012}, so
for $m\le\alpha/2$ the obstruction sits at a higher index.  Exact
computation of the coefficients, using the Lagrange series
$U_{2m}(X)=\sum_{n\ge1}n^{n-1}(-2m)^{n-1}X^n/n!$, gives the following
values for $\alpha=1$; the script is provided as an ancillary file.
\begin{center}
\begin{tabular}{@{}rrrrr@{}}
\toprule
$m$ & $1/(2m)$ & first $k$ with $c_k<0$ & $c_k$ & $R_{2m}$\\
\midrule
$1/2$   & $1$  & $4$  & $-2.5\times10^{-1}$  & $0.37$\\
$1/4$   & $2$  & $4$  & $-1.0\times10^{-2}$  & $0.74$\\
$1/10$  & $5$  & $8$  & $-2.0\times10^{-6}$  & $1.84$\\
$1/20$  & $10$ & $14$ & $-1.1\times10^{-13}$ & $3.68$\\
$1/50$  & $25$ & $28$ & $-1.2\times10^{-36}$ & $9.20$\\
$1/100$ & $50$ & $54$ & $-1.1\times10^{-83}$ & $18.39$\\
\bottomrule
\end{tabular}
\end{center}
In every case, in the range computed, all coefficients of even index from
the first negative one onwards are negative and all coefficients of odd
index are positive, the alternation expected from a singularity on the
negative axis.  In these six computations with $\alpha=1$ the first
negative index roughly tracks $1/(2m)$; we do not claim a proof of such a
law.  The exact scaling $\alpha\varphi_{2m}(X)=\varphi_{2m/\alpha}(\alpha X)$,
which follows from $\Wo$ being the inverse of $w\mapsto w\ee^{w}$, gives
$c_k(m,\alpha)=\alpha^kc_k(m/\alpha,1)$, so the first negative index
depends only on $m/\alpha$.

For fixed parameters the singularity at $-R_{2m}$ does determine the
asymptotics.  The expansion of $\Wo$ at its
branch point \cite[Section~4]{CorlessGonnetHareJeffreyKnuth1996} gives,
with $c=2m$ and $t=1+X/R_c$,
\[
 \varphi_c(X)=A(t)-\frac{2\sqrt2}{3c}\,t^{3/2}+O(t^{5/2}),
 \qquad A\text{ holomorphic near }0,\quad A(0)=-\frac1{2c},
\]
and the transfer theorem \cite[Chapter~VI]{FlajoletSedgewick2009} yields
\[
 c_k\sim(-1)^{k+1}\,\frac{\alpha}{c\sqrt{2\pi}}\,\ee^{-\alpha/(2c)}\,
 R_c^{-k}\,k^{-5/2}\qquad(k\to\infty),
\]
in agreement with the observed alternation.  The amplitude
$\ee^{-\alpha/(2c)}=\ee^{-\alpha/(4m)}$ is exponentially small in
$\alpha/m$.  This is consistent with the late and tiny negative
coefficients observed for small $m$, and it helps explain why the computer
evidence mentioned in \cite{LoiZeddaZuddas2012} could not be turned into a
proof by inspection; the fixed-parameter asymptotics do not, however,
locate the first sign change uniformly as $m\to0$.
\end{remark}

\begin{remark}\label{rem:alpha}
The proof does not depend on the size of $\alpha$, which only rescales
$\varphi_{2m}$.  Balanced metrics in the sense of Donaldson are
projectively induced (see \cite{LoiZeddaZuddas2012}), so
Theorem~\ref{thm:A} also recovers the statement of
\cite{LoiZeddaZuddas2012} that $\alpha g_m$ is never balanced
for $m>0$.  Immersions of $(\C^2,\alpha g_m)$ into complex hyperbolic
space, and of $(\C^2,g_m)$ into $\C^N$, were already excluded in
\cite{LoiZeddaZuddas2012}; since
$\alpha\Phi_m(z)=\Phi_{m/\alpha}(\sqrt\alpha\,z)$ by
\eqref{eq:lebrun-coordinates}--\eqref{eq:lebrun-potential}, the metric
$\alpha g_m$ is the pull-back of $g_{m/\alpha}$ under a dilation, so the
Euclidean statement covers every multiple as well.
\end{remark}

\section{The Pringsheim obstruction}\label{sec:pringsheim}

The proof of Theorem~\ref{thm:A} used only two features of the
exponentiated axis potential: its Taylor series at the origin has a finite
radius of convergence, and the function itself continues analytically along
the whole positive axis.  Theorem~\ref{thm:B} isolates these two features
for a rotation-invariant complex curve in an arbitrary K\"ahler manifold;
for radial metrics the same mechanism appears in
\cite[Lemma~4.1]{ArezzoLoiPlaciniZedda2026}.

\begin{proof}[Proof of Theorem~\ref{thm:B}]
Suppose an open neighbourhood $\mathcal U$ of $p$ admits a K\"ahler
immersion into $(\CP^N,g_{\FS})$.  Since $\iota(0)=p$, there is
$\varepsilon\in(0,\rho)$ with $\iota(\{|z|^2<\varepsilon\})\subset\mathcal U$.
Composing the immersion with $\iota$ on this subdisc gives a K\"ahler
immersion of $(\{|z|^2<\varepsilon\},\iota^*g)$, so $\iota^*g$ is
$1$-resolvable at $z=0$ by Theorem~\ref{thm:calabi}.  Its potential is
$\varphi(|z|^2)$ with $\varphi(0)=0$, so by \eqref{eq:radial-matrix} the
Taylor coefficients $c_k$ of $\ee^{\varphi(X)}$ satisfy $c_k\ge0$ for
$k\ge1$, and $c_0=1$.  Since $\varphi$ is analytic at $0$, the radius of
convergence $r$ is positive, and by hypothesis $r<\rho$.  By
Theorem~\ref{thm:pringsheim}, the sum of $\sum c_kX^k$ is singular at
$X=r$.

On the other hand $\varphi$ is real analytic on $[0,\rho)\ni r$,
so $\ee^{\varphi}$ is holomorphic on a connected complex neighbourhood
$\Omega$ of $[0,r]$.  The sum and $\ee^{\varphi}$ agree near $0$, hence, by
the identity theorem, on the connected component of $\Omega\cap\{|X|<r\}$
that contains $[0,r)$; this component contains $\Delta\cap\{|X|<r\}$ for every
disc $\Delta\subset\Omega$ centred at $r$, because that intersection is convex
and meets $[0,r)$.  So $\ee^{\varphi}$ is an analytic continuation of the
sum to a neighbourhood of $r$, a contradiction.  Hence no neighbourhood of
$p$ admits a K\"ahler immersion into $(\CP^N,g_{\FS})$; in particular
there is no global one.
\end{proof}

For torus-invariant metrics on $\C^n$ each coordinate axis is a
rotation-invariant complex curve, so Theorem~\ref{thm:B} gives a test that
looks at one variable at a time.

\begin{corollary}\label{cor:torus}
Let $g$ be a K\"ahler metric on $\C^n$ invariant under the standard action
of the torus $(S^1)^n$, with a K\"ahler potential
$\Phi(|z_1|^2,\dots,|z_n|^2)$ that is real analytic on $[0,\infty)^n$ and
vanishes at the origin.  If for some $j$ the power series of
$X\mapsto\exp\Phi(0,\dots,0,X,0,\dots,0)$ at $X=0$ has finite radius of
convergence, then $g$ is not projectively induced.
\end{corollary}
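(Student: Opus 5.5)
The plan is to deduce Corollary~\ref{cor:torus} directly from Theorem~\ref{thm:B}, applied with $M=\C^n$, $p=0$, $\rho=\infty$, and $\iota$ the inclusion of the $j$-th coordinate axis, $\iota(z)=(0,\dots,0,z,0,\dots,0)$.

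First I would check that this $\iota$ meets the hypotheses of Theorem~\ref{thm:B}. It is a holomorphic embedding of $\C=\{|z|^2<\infty\}$ with $\iota(0)=0$. Since $\Phi$ is a K\"ahler potential for $g$, the function $\Phi\circ\iota$ is a K\"ahler potential for $\iota^*g$, because pull-back commutes with $\partial\bar\partial$. Concretely,
\[
\Phi\circ\iota(z)=\varphi(|z|^2),\qquad \varphi(X)=\Phi(0,\dots,0,X,0,\dots,0).
\]
Then $\varphi(0)=\Phi(0)=0$. Moreover $\varphi$ is real analytic on $[0,\infty)$, since it is the restriction of a function real analytic on $[0,\infty)^n$ to a coordinate half-line. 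Here I read real analyticity on the closed orthant as ``extends holomorphically to a complex neighbourhood of $[0,\infty)^n$''. Restricting that extension to the complex line through the $j$-th axis gives a holomorphic extension of $\varphi$ to a complex neighbourhood of $[0,\infty)$, which is exactly the form required in Theorem~\ref{thm:B}.

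By hypothesis the Taylor series of $\ee^{\varphi}$ at $0$ has finite radius $r<\infty=\rho$. Theorem~\ref{thm:B} then says that no neighbourhood of $0$ admits a K\"ahler immersion into $(\CP^N,g_{\FS})$. In particular $g$ is not projectively induced.

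The only point needing care is matching the notion of real analyticity on the closed orthant (including boundary points) with the neighbourhood formulation in Theorem~\ref{thm:B}; the rest is immediate. If one prefers, one can note that near $X=0$ analyticity of $\varphi$ is automatic, since a real analytic torus-invariant potential is a real analytic function of $|z_j|^2$. The interior points $X>0$ then follow from analyticity of $\Phi$ on the open orthant in each variable.
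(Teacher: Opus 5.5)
Your proof is correct and is exactly the paper's: apply Theorem~\ref{thm:B} to the $j$-th coordinate axis with $p=0$ and $\rho=\infty$, checking the hypotheses just as you do. Your closing aside is not needed and is slightly misstated: for $n\ge2$ the axis points with $X>0$ lie on the boundary of $[0,\infty)^n$, not in the open orthant, and your main argument correctly relies instead on the stated analyticity on the closed orthant.
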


\begin{proof}
Apply Theorem~\ref{thm:B} to the $j$-th coordinate axis, with
$\rho=\infty$.
\end{proof}

Theorem~\ref{thm:A} is the case $n=2$, $\Phi=\alpha\Phi_m$ of
Corollary~\ref{cor:torus}.  The hypothesis that $\Phi$ be real analytic on
the closed quadrant holds for LeBrun's potential because the map
$(U,W)\mapsto(U\ee^{2m(U-W)},W\ee^{2m(W-U)})$, with $U=u^2$ and $W=v^2$,
is a bijection of $[0,\infty)^2$ onto itself with Jacobian determinant
$1+2m(U+W)>0$; only the axis is needed in the proof.

\section{Exponential families}\label{sec:nef}

We now turn to statistics and prove Corollary~\ref{cor:C}.  The proof
parallels that of Theorem~\ref{thm:A}.  The variance function determines the
cumulant function, which turns out to be $\varphi_c(\ee^\theta)$ up to
normalization.  The generating measure must then sit on the lattice $\N_0$
with nonnegative weights, which plays the role of Calabi's criterion, and
Lemma~\ref{lem:radius} gives the contradiction.  We first recall the
definitions.

Let $\nu$ be a positive Borel measure on $\R$, not a point mass, whose
Laplace transform $L(\theta)=\int\ee^{\theta x}\,\nu(\dd x)$ is finite on a
nonempty open interval.  Let $\Theta$ be the interior of
$\{L<\infty\}$, an open interval, and $K=\log L$ on $\Theta$.  The natural
exponential family generated by $\nu$ is
$\{\ee^{\theta x-K(\theta)}\nu:\theta\in\Theta\}$; its mean map
$\mu=K'$ is an increasing real analytic bijection of $\Theta$ onto the mean
domain $M=K'(\Theta)$, and its variance function is
$V(\mu)=K''(\theta(\mu))$.  Together with the mean domain, the variance
function determines the family \cite{Morris1982,LetacMora1990}.  We use one standard fact
\cite[Chapters~8--9]{BarndorffNielsen1978}: if $\Theta=\R$ the family is
regular, hence steep, and then $M$ is the interior of the convex hull of
$\supp\nu$.

\begin{proof}[Proof of Corollary~\ref{cor:C}]
Suppose $\nu$ generates a natural exponential family with mean domain
$M=(0,\infty)$ and variance function $V(\mu)=\mu/(1+c\mu)$.  Put
$p=K'$ on $\Theta$.  Then $p'=K''=V(p)=p/(1+cp)$, so
$(1/p+c)\,p'=1$ and
\begin{equation}\label{eq:theta-of-p}
 \theta=\log p+cp-\theta_0
\end{equation}
for a constant $\theta_0$.  The right side of \eqref{eq:theta-of-p} is an
increasing bijection of $p\in(0,\infty)$ onto $\R$, and $p$ ranges over
$M=(0,\infty)$, so $\Theta=\R$.  Replacing $\nu$ by an exponential tilt we
may take $\theta_0=0$.  Then $p\ee^{cp}=\ee^{\theta}$, that is
$p=U_c(\ee^\theta)$ in the notation \eqref{eq:Uc}, and integrating
$K'=p$ with the help of Lemma~\ref{lem:mean} gives
\begin{equation}\label{eq:K-lambert}
 K(\theta)=\varphi_c(\ee^{\theta})+a
 \qquad(\theta\in\R)
\end{equation}
for a constant $a$; indeed
$\frac{\dd}{\dd\theta}\varphi_c(\ee^\theta)=\ee^\theta\varphi_c'(\ee^\theta)=U_c(\ee^\theta)=p$.

Since $\Theta=\R$, the family is steep and $M=(0,\infty)$ is the interior
of the convex hull of $\supp\nu$, so $\supp\nu\subset[0,\infty)$.  Put
$u=\ee^{\theta}>0$ and $\Lambda(u)=L(\theta)=\int_{[0,\infty)}u^x\,\nu(\dd x)$.
By \eqref{eq:K-lambert}, $\Lambda(u)=\ee^{a}G(u)$ with
$G=\ee^{\varphi_c}$, which is holomorphic on $D_c$ with $G(0)=1$; write
$G(u)=\sum_{n\ge0}f_nu^n$ for $|u|<R_c$.  For $t>-\log R_c$ we therefore
have
\[
 \int_{[0,\infty)}\ee^{-tx}\,\nu(\dd x)=\ee^{a}\sum_{n\ge0}f_n\ee^{-nt},
\]
and the right side is the Laplace transform of the signed measure
$\eta=\ee^{a}\sum_nf_n\delta_n$, whose total variation transform
$\sum|f_n|\ee^{-nt}$ converges for the same $t$.  Fix $t_0>-\log R_c$;
the finite signed measure $\ee^{-t_0x}(\nu-\eta)$ on $[0,\infty)$ has
vanishing Laplace transform at every positive argument, so it is zero by
the uniqueness theorem for Laplace--Stieltjes transforms
\cite[Chapter~II, \S6]{Widder1941}, and $\nu=\eta$.  Hence $f_n\ge0$ for
all $n$ and $\nu$ is supported on the nonnegative integers.

Now $L(\theta)=\ee^{a}\sum_nf_n\ee^{n\theta}$ is finite for every
$\theta\in\R$, so the series $\sum f_nu^n$ converges for every $u>0$ and
its radius of convergence is infinite.  Its sum is an entire function that
agrees with $G$ on $D_c$; in particular $G$ extends analytically to a
neighbourhood of $-R_c$.  This contradicts Lemma~\ref{lem:radius} (with
$\alpha=1$).  Hence no such $\nu$ exists.  The remaining assertions follow
from Remark~\ref{rem:landau}: a family with this variance function on an
open interval of positive means, and likewise a positive measure whose
Laplace transform equals $\exp K_c$ on an open interval (its family has
variance function $\mu/(1+c\mu)$ on the corresponding interval of means),
has natural domain $\R$ and mean domain $(0,\infty)$, so the argument
above applies.
\end{proof}

\begin{remark}\label{rem:landau}
Suppose the variance function $\mu/(1+c\mu)$ is only assumed on some open
interval $J\subset(0,\infty)$ of means, and let
$I=(K')^{-1}(J)\subset\Theta$.  The differential equation holds on $I$, so
on $I$ we obtain \eqref{eq:theta-of-p} and
\[
 K(\theta)=\varphi_c(\ee^{\theta+\theta_0})+a .
\]
Both sides of this identity are real analytic on the interval $\Theta$,
the right side being real analytic on all of $\R$, so by the identity
theorem the identity holds on all of $\Theta$.  Hence $L=\ee^{K}$
continues analytically across every finite endpoint of $\Theta$.  By
Landau's theorem, the Laplace--Stieltjes transform of a positive measure
is singular at each finite endpoint of its interval of convergence
\cite[Chapter~II, \S5]{Widder1941}; therefore $\Theta=\R$.  Then
\eqref{eq:theta-of-p}, now valid on all of $\R$, shows that the mean domain
$K'(\R)$ is all of $(0,\infty)$ and that the variance function is
$\mu/(1+c\mu)$ on all of it, so the proof of Corollary~\ref{cor:C}
applies.  This is a special case of the maximality of the mean domain
proved by Letac and Mora \cite[Theorem~3.1]{LetacMora1990}; we include the
short argument to keep the proof self-contained.
\end{remark}

\begin{remark}
The lattice step in the proof is the familiar fact that a family whose
inverse mean map has the form $\theta=q\log(\mu-\mu_0)+(\text{analytic})$
at an endpoint $\mu_0$ of its mean domain is concentrated on
$\mu_0+q^{-1}\N_0$; see \cite[Proposition~4.4]{LetacMora1990} for a
precise criterion, stated there for families on $\N_0$.  The Poisson family is the case $V(\mu)=\mu$.  What is
specific here is the second step: the constant $c$ in $1/V(\mu)=1/\mu+c$
produces the critical point $p=-1/c$ of $p\mapsto p\ee^{cp}$, hence a
branch point at $u=-R_c$ of the mean $p=U_c(u)$ as a function of
$u=\ee^\theta$, and a lattice family with a full natural domain
cannot accommodate a singularity of its generating function at a negative
real point.  For $c<0$ the critical point moves to the positive axis and the
argument says nothing.  Indeed $\mu/(1+c\mu)=\sum_{n\ge1}|c|^{n-1}\mu^n$ is
then the variance function of an infinitely divisible family with mean
domain $(0,1/|c|)$ \cite[Corollary~3.3]{LetacMora1990}; Bryc and Ismail
give the generating law explicitly, up to a dilation
\cite[Theorem~2.5]{BrycIsmail2005}.
\end{remark}

\section{Interpretation and scope}\label{sec:remarks}

We close with the Gibbons--Hawking description of Taub--NUT, which makes
the statistical picture exact on the axis, and with remarks on how far the
method reaches.

Taub--NUT is the Gibbons--Hawking metric \cite{Hawking1977,GibbonsHawking1978}
over $\R^3$ with harmonic function $1+a/|x|$, where $a>0$ is proportional
to the mass; it is asymptotically locally flat because of the constant
term.  In LeBrun's K\"ahler picture the circle
$(z_1,z_2)\mapsto(\ee^{i\phi}z_1,\ee^{-i\phi}z_2)$ is the Gibbons--Hawking
circle, the origin is the nut, and the quotient map $\C^2\to\R^3$ is its
hyperk\"ahler moment map.  In the normalization \eqref{eq:lebrun-potential}
this map is
\[
 (z_1,z_2)\longmapsto x=\bigl(\tfrac12(u^2-v^2),\,z_1z_2\bigr)\in\R\times\C,
 \qquad |x|=\tfrac12(u^2+v^2).
\]
A direct computation, checked in Lean and by an ancillary script, puts $g_m$
in the Weyl--Papapetrou form used by Li and Sun \cite[\S7.1.1]{LiSun2025}: $g_m$
is the Gibbons--Hawking metric of the axisymmetric harmonic function
\[
 H(x)=2m+\frac1{2|x|}.
\]
This is the one-centre case, with constant term $2m$, of the multi-Taub--NUT
metrics \cite{Hawking1977,GibbonsHawking1978}, in the normalization of Li
and Sun.  In their language the two half-lines into which the nut divides
the axis $\R\times\{0\}$ are the two rods of Taub--NUT
\cite[Example~3.16]{LiSun2025}, and the coordinate axes $z_j=0$ are their
preimages.  On $z_2=0$, the preimage of the positive half-line, we have
$v=0$ and $|x|=\tfrac12u^2=\tfrac12\mu$, where $\mu=u^2$ is the mean of the
axis family of Section~\ref{sec:nef}, so
\[
 H=2m+\frac1\mu=\frac1{V(\mu)} .
\]
The inverse variance function of the axis family is therefore exactly the
Gibbons--Hawking function along that half-line.  Its pole at the nut alone
would give the Poisson family, and its constant term $2m$ is the value at
infinity.  For the flat metric $m=0$ the constant term vanishes, the axis
family is Poisson, its generating function $\exp(\ee^{\theta})$ is an entire
function of $\ee^{\theta}$, and indeed $\C^2$ is projectively induced.  Theorem~\ref{thm:A} thus says that the constant term alone
destroys projective inducibility, however small it is relative to the pole.
The proofs do not use this section.

The same computation applies verbatim to any torus-invariant K\"ahler
metric on $\C^2$ one of whose axis families has inverse variance $1/\mu$
plus a positive constant, and Theorem~\ref{thm:B} applies to any K\"ahler
manifold containing a rotation-invariant complex curve whose exponentiated
potential has a finite radius of convergence but is analytic along the
positive axis.  For Calabi's metrics on line bundles, which include
Eguchi--Hanson, non-inducibility was proved in \cite{LoiZeddaZuddas2021};
for Stenzel's metrics on the complexifications of $\CP^n$ and
$\mathbb{HP}^n$ it was proved for the multiples $cg$ with $0<c\le1$
\cite[Theorem~1]{Zedda2021}.  We hope to
return elsewhere to Ricci-flat torus-invariant metrics studied through their
exponential families, including multi-centre Gibbons--Hawking metrics with
collinear centres.  In the harmonic-map framework of Li and Sun
\cite{LiSun2025}, which describes torus-invariant Ricci-flat four-manifolds
whose torus action has a fixed point by axisymmetric harmonic maps into the
hyperbolic plane, these multi-centre metrics correspond to rod structures
of degree zero \cite[Theorem~1.2]{LiSun2025}.

\paragraph{Formal verification.}
The analytic core of the paper has been checked in Lean~4 with Mathlib.
This covers Lemma~\ref{lem:mean}, and Lemma~\ref{lem:radius} in a
real-axis form that suffices for the formal proofs: no function analytic at
$-R_c$ agrees
with $G_c$ on an interval $(-R_c,-R_c+\varepsilon)$, and the Taylor radius
of $G_c$ at $0$ is at most $R_c$.  It also covers the Vivanti--Pringsheim
theorem in the real-analytic form needed and the conclusions of
Theorems~\ref{thm:A} and~\ref{thm:B} that some Taylor coefficient is
negative.  Among the explicit computations it checks the identities
\eqref{eq:axis}, the coefficient of $|z_1|^4$, the scaling identity and the
asymptotic constants of Remark~\ref{rem:numerics}, the dilation identity of
Remark~\ref{rem:alpha}, the Jacobian of Section~\ref{sec:pringsheim}, the
variance function of Section~\ref{sec:nef}, and the Gibbons--Hawking form
of Section~\ref{sec:remarks}, including the harmonicity of $H$ and the
identity $H=1/V(\mu)$ on the axis $z_2=0$.  Calabi's criterion, the
complex-analytic facts about $\Wo$, the transfer theorem, the
measure-theoretic steps of Section~\ref{sec:nef}, the standard expressions
of a torus-invariant K\"ahler metric and of its moment maps in logarithmic
coordinates, and the chain rule relating derivatives in logarithmic
coordinates to derivatives in $(u^2,v^2)$ are taken as inputs.  The
source files are provided as ancillary files.

\paragraph{AI-use disclosure.}
The author used Anthropic Claude Code and OpenAI Codex as interactive
research and writing tools.  They assisted with exploratory discussion,
testing and refinement of ideas, literature and source organization, code
development and verification including the Lean formalization,
mathematical error checking, and editorial revision.

\bibliographystyle{alpha}
\bibliography{references}

\end{document}